\newtheorem{Theorem}{Theorem}[section]
\newtheorem{Lemma}[Theorem]{Lemma}
\newtheorem{Corollary}[Theorem]{Corollary}
\newtheorem{Remark}[Theorem]{Remark}
\newcommand{\RR}{{{\rm I} \kern -.15em {\rm R} }}
\newcommand{\C}{{{\rm l} \kern -.42em {\rm C} }}
\newcommand{\nat}{{{\rm I} \kern -.15em {\rm N} }}
\newcommand{\be}{\begin{equation}}
\newcommand{\ee}{\end{equation}}
\newcommand{\beq}{\begin{eqnarray}}
\newcommand{\eeq}{\end{eqnarray}}
\newcommand{\beqs}{\begin{eqnarray*}}
\newcommand{\eeqs}{\end{eqnarray*}}
\newcommand{\bt}{\begin{Theorem}}
\newcommand{\et}{\end{Theorem}}
\newcommand{\br}{\begin{Remark}}
\newcommand{\er}{\end{Remark}}
\newcommand{\bc}{\begin{Corollary}}
\newcommand{\ec}{\end{Corollary}}
\newcommand{\bl}{\begin{Lemma}}
\newcommand{\el}{\end{Lemma}}
\newcommand{\bd}{\begin{definition}}
\newcommand{\ed}{\end{definition}}
\renewcommand{\geq}{\geqslant}
\renewcommand{\leq}{\leqslant}
\title{Exponential decay for nonlinear abstract evolution equations with a countably infinite number of time-dependent time delays}
\author{
Alessandro Paolucci\footnote{Dipartimento di Ingegneria e Scienze dell'Informazione e Matematica, Universit\`{a} di L'Aquila, Via Vetoio, Loc. Coppito, 67010 L'Aquila Italy (\texttt{alessandro.paolucci2@graduate.univaq.it}).}
}
\date{}
\begin{document}

\textwidth=160 mm

\textheight=225mm

\parindent=8mm

\frenchspacing

\maketitle

\begin{abstract}
In this paper we analyze a nonlinear abstract evolution equation with an infinite number of time-dependent time delays and a Lipschitz continuous nonlinear term. By using a fixed point argument we prove the existence of a mild solution. This allows us to take into account also nonnegative time delays. Furthermore, by using  Gronwall estimates, exponential decay of the solution is also proved under some smallness assumptions on the parameters appearing in the system and on the initial data. Finally some examples are illustrated. 
\end{abstract}

\vspace{5 mm}

\def\qed{\hbox{\hskip 6pt\vrule width6pt
height7pt
depth1pt  \hskip1pt}\bigskip}

 {\bf 2010 Mathematics Subject Classification:}
35L90, 47J35.

 {\bf Keywords and Phrases:}  stabilization, evolution equation, delay feedbacks, time delay systems.

\section{Introduction}
\label{pbform}

\setcounter{equation}{0}
In the last decades time delay systems have been studied by several authors. Time delay effects are very common in physical and biological models and it's well-known that the introduction of a delay term in the model can induce instability (see e.g. \cite{Datko2, Datko, NP}). From a mathematical point of view, it's very useful studying the stability behavior of solution to such systems, which is the main topic of a large number of papers (see, for instance, \cite{AC, ANP, Dafermos, Guesmia, KP, NP2015, NP2018, NP, PP, P, Yang, YY}). In this paper, we consider $H$ an Hilbert space endowed with inner product $\langle \cdot, \cdot\rangle_H$ and norm $||\cdot||_H$ and let $U:[0,+\infty)\to H$ satisfy the following abstract equation:
\begin{equation}\label{abstract}
\begin{array}{l}
\displaystyle{ U'(t)=AU(t)+\sum_{i=1}^{+\infty} k_i(t)B_iU(t-\tau_i(t))+F(U(t)), \quad t\geq 0,}\\
\displaystyle{U(0)=U_0,}\\
\displaystyle{B_iU(s)=g_i(s), \quad s\in I:=\bigcup_{i=1}^{+\infty} [-\tau_i(0),0], \ i\in\nat.}
\end{array}
\end{equation}
In \eqref{abstract}, $A$ generates an exponentially stable $C_0$-semigroup $\{S(t)\}_{t\geq 0}$, namely there exist $M,\omega >0$ such that
\begin{equation}\label{semigroup}
||S(t)||_{\mathcal{L}(H)} \leq Me^{-\omega t}, \quad t\geq 0,
\end{equation}
where $\mathcal{L}(H)$ is the set of all linear operators from $H$ into itself. We will show that the system will be exponentially stable if we add also a countably number of delay terms. For any $i\in \nat$, we consider $\tau_i(\cdot):[0,+\infty)\to [0,+\infty)$ the time-dependent time delays. We suppose that
\begin{equation}\label{derivatadelay}
\tau_i'(t)\leq c_i<1,
\end{equation}
where $c_i$ are real constants, for any $i\in\nat$. Therefore, for any $t\geq 0$, $t-\tau_i(t)\geq -\tau_i(0)$. We define
\begin{equation}\label{maxdelay}
\tau_*:=
\begin{cases}
\max_{i\in\nat} \tau_i(0), \ \text{if it exists}, \\
\sup_{i\in\nat} \tau_i(0)\leq +\infty, \ \text{otherwise}.
\end{cases}
\end{equation}
In the first case, $I=[-\tau_*,0]$, while in the second one the initial conditions are defined on the interval $I=(-\tau_*,0]$. Moreover, $B_i:H\to H$ are bounded linear operators from $H$ into itself, for any $i\in\nat$. In order to simplify the notation, we set
$$
b_i:=||B_i||_{\mathcal{L}(H)}, \qquad \forall i\in \nat.
$$ 
The coefficients $k_i:[0,+\infty)\to\RR$ are functions in $L^1_{loc}([0,+\infty))$ for any $i\in\nat$. Furthermore, we take the nonlinear term $F:H\to H$ Lipschitz continuous, namely there exists $L>0$ such that
$$
||F(U)-F(V)||_H\leq L||U-V||_H,
$$
for any $U,V\in H$. We suppose $F(0)\equiv 0$. Finally, we take $g_i\in \mathcal{C}(I;H)$ for any $i\in \nat$. 

We are interested in studying the well-posedness and the exponential stability result for system \eqref{abstract} under a smallness assumption on the delays and on the initial data, which we explain later on. Here, we stress the fact that zero delays are also allowed. This is due to the fact that we will use a fixed point argument in order to study the existence and uniqueness of solution to \eqref{abstract}, instead of the usual iterative one. Indeed, as described in \cite{YY}, the iterative argument allows us to study the well-posedness of solution to a certain system by using time interval of length $\tau_{inf}:=\inf_{i\in\nat} \inf_{t\geq 0} \tau_i(t)$, which, in our case, can be equal to $0$ for some $t\geq 0$.

The theory presented in this paper extends the one introduced in \cite{KP}. There, the authors considered system \eqref{abstract} with a finite number of time-varying time delays and they studied the exponential decay of its solution. They also presented some examples, such as (both linear and nonlinear) damped or viscoelastic wave equations with delay feedback. See also \cite{NP2018} for the study of abstract evolution equations with a finite number of time delays in the nonlinear source term and \cite{YY} for evolution equations with infinite constant time delays.

Our theory deals with many models, studied by several authors. Stability of delayed linear and nonlinear wave equation have been studied in many papers (see for instance \cite{ANP, FP, Guesmia, P2012}). We also mention \cite{Chentouf, MK, PP, SS, Yang} for the study of plate equations, also named either Euler-Bernoulli or Timoshenko model, and \cite{AC, ACS, ANP, PP, P} for viscoelastic wave-type equation. In particular, we refer to \cite{A} for a Timoshenko model for beams in one dimension.

\textbf{Main assumption.} Throughout the paper, we assume the following inequality: 
\begin{equation}\label{assumption}
 e^{\omega \tau_*}\sum_{i=1}^{+\infty}\int_{-\tau_*}^0  \frac{1}{1-c_i} |k_i(\varphi_i^{-1}(z))| \cdot ||g_i(z)||_H dz+\sum_{i=1}^{+\infty} \int_0^{+\infty} \frac{b_i}{1-c_i}|k_i(\varphi_i^{-1}(z))| dz <\frac 1 M ,
\end{equation}
where $\varphi_i(s)=s-\tau_i(s)$, for any $i\in\nat$ and $s\in[-\tau_*,+\infty)$. Notice that $\varphi_i(\cdot)$ is invertible for any $i\in\nat$ thanks to hypothesis \eqref{derivatadelay} on the delay terms.

We will prove existence and uniqueness of a mild solution to \eqref{abstract}. Here, we stress the fact that this result is given also if the semigroup generated by the operator $A$ in \eqref{abstract} is increasing, namely if it satisfies for some $M,\omega >0$
$$
||S(t)||_{\mathcal{L}(H)} \leq Me^{\omega t}, \quad t\geq 0.
$$
Therefore, we will prove the following theorem.
\begin{Theorem}\label{well-posedness}
Suppose that $A$ is the generator of a $C_0$-semigroup $\{ S(t)\}_{t\geq 0}$, which satisfies the following inequality:
\begin{equation}\label{dis1}
||S(t)||_{\mathcal{L}(H)}\leq M e^{\omega t}, \qquad M,\omega>0.
\end{equation}
Then, for any $U_0\in H$ and for any $g_i\in \mathcal{C}(I;H)$, $i\in\nat$, satisfying \eqref{assumption}, there exists a unique mild solution to \eqref{abstract} satisfying
\begin{equation}\label{mild_solution}
U(t)=S(t)U_0+\int_0^t S(t-s)\left[ \sum_{i=1}^{+\infty} k_i(s)B_iU(s-\tau_i(s))+F(U(s))\right] ds,
\end{equation}
for any $t\geq 0$.
\end{Theorem}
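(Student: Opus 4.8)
The plan is to realize the right-hand side of \eqref{mild_solution} as a fixed-point operator and to invoke the Banach contraction principle. Since the semigroup bound \eqref{dis1} is allowed to grow, I would work in the weighted Banach space
\[
X:=\Big\{U\in \mathcal{C}([0,+\infty);H):\ \|U\|_X:=\sup_{t\geq 0}e^{-\beta t}\|U(t)\|_H<+\infty\Big\},
\]
for a parameter $\beta>\omega$ to be fixed later, and define $\mathcal{T}$ by letting $(\mathcal{T}U)(t)$ be the right-hand side of \eqref{mild_solution}, substituting $B_iU(s-\tau_i(s))=g_i(s-\tau_i(s))$ whenever $s-\tau_i(s)<0$. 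A fixed point of $\mathcal{T}$ in $X$ is exactly a mild solution, so the theorem reduces to showing that $\mathcal{T}$ is a well-defined contraction on $X$.

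The computational heart is the change of variables $z=\varphi_i(s)=s-\tau_i(s)$ in each delay integral. Since $\varphi_i'(s)=1-\tau_i'(s)\geq 1-c_i>0$ by \eqref{derivatadelay}, $\varphi_i$ is a strictly increasing bijection with $ds=\frac{dz}{1-\tau_i'(\varphi_i^{-1}(z))}$ and $\frac{1}{1-\tau_i'}\le \frac{1}{1-c_i}$. This turns $\int_0^t S(t-s)k_i(s)B_iU(s-\tau_i(s))\,ds$ into an integral in $z$, split at $z=0$: the part $z<0$ carries the prescribed history $g_i(z)$ and is independent of $U$, while the part $z\ge 0$ carries the unknown $B_iU(z)$. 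Bounding $\|S(t-\varphi_i^{-1}(z))\|_{\mathcal{L}(H)}\le Me^{\omega(t-\varphi_i^{-1}(z))}$, multiplying by the weight $e^{-\beta t}$, and using $z-\varphi_i^{-1}(z)=-\tau_i(\varphi_i^{-1}(z))\le 0$ together with $\varphi_i^{-1}(z)\le t$ on the range of integration, I obtain the key cancellation $e^{-\beta\tau_i(\varphi_i^{-1}(z))}\le 1$ for the future part and a bounded factor (at most $e^{\omega\tau_*}$) for the history part. Summing over $i\in\nat$, the $U$-dependent delay contribution to $\|\mathcal{T}U\|_X$ is at most $M\sum_{i}\frac{b_i}{1-c_i}\int_0^{+\infty}|k_i(\varphi_i^{-1}(z))|\,dz\cdot\|U\|_X$, while the history contribution is at most $Me^{\omega\tau_*}\sum_{i}\frac{1}{1-c_i}\int_{-\tau_*}^0|k_i(\varphi_i^{-1}(z))|\,\|g_i(z)\|_H\,dz$; assumption \eqref{assumption} guarantees that both series converge and that the first bound is $<1$, and the absolute convergence it provides also legitimizes interchanging summation and integration.

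For the nonlinear term I would use $F(0)=0$ and Lipschitz continuity to estimate $e^{-\beta t}\big\|\int_0^t S(t-s)(F(U(s))-F(V(s)))\,ds\big\|\le \frac{ML}{\beta-\omega}\|U-V\|_X$, the gain $\frac{1}{\beta-\omega}$ coming from integrating $e^{(\beta-\omega)s}$ against the weight (this is why $L$ is absent from \eqref{assumption}: it is absorbed by the free choice of $\beta$). Combining the two estimates, for $U,V\in X$ the history terms cancel in the difference and
\[
\|\mathcal{T}U-\mathcal{T}V\|_X\le\Big(M\sum_{i=1}^{+\infty}\frac{b_i}{1-c_i}\int_0^{+\infty}|k_i(\varphi_i^{-1}(z))|\,dz+\frac{ML}{\beta-\omega}\Big)\|U-V\|_X.
\]
Since \eqref{assumption} forces the first summand to be strictly less than $1$, I can fix $\beta>\omega$ large enough that the whole bracket is a constant $\kappa<1$; adding the finite terms $\|S(t)U_0\|$ and the history part shows $\mathcal{T}(X)\subseteq X$. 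Hence $\mathcal{T}$ is a contraction on the Banach space $X$ and Banach's theorem yields a unique fixed point, i.e. a unique mild solution, whose continuity follows from the strong continuity of $\{S(t)\}_{t\ge 0}$ and dominated convergence.

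The step I expect to be the main obstacle is making the infinite-delay bookkeeping rigorous: justifying that $\sum_i k_i(s)B_iU(s-\tau_i(s))$ converges in $H$ and is integrable, that sum and integral may be exchanged, and that the $i$-uniform change of variables reproduces exactly the two series appearing in \eqref{assumption}. The role of the threshold $1/M$ is precisely to close these estimates into a genuine contraction.
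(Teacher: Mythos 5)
Your proposal is correct and follows essentially the same route as the paper: a weighted sup-norm space $\sup_{t\ge0}e^{-\beta t}\|U(t)\|_H$ with $\beta>\omega$ chosen large, the Duhamel operator, the change of variables $z=\varphi_i(s)$ splitting history from future, and the Banach fixed point theorem closed by assumption \eqref{assumption} together with the absorbable factor $ML/(\beta-\omega)$. The only (harmless) divergences are that you let the history terms cancel in the difference $\mathcal{T}U-\mathcal{T}V$, which slightly sharpens the contraction constant compared with the paper's $\tilde M$, and that you rely on Banach's theorem for uniqueness where the paper adds a separate Gronwall argument to cover uniqueness among all mild solutions, not only those in the weighted space.
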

As already mentioned, the main result of this paper is the following theorem.
\begin{Theorem}\label{decay}
Let $U:[0,+\infty)\to H$ be the unique mild solution to \eqref{abstract} where $A$ generates an exponentially stable $C_0$ semigroup $\{ S(t)\}_{t\geq 0}$, namely $S(t)$ satisfies \eqref{semigroup}. If
$
L<\omega,
$
and assumption \eqref{assumption} is satisfied, then we have that
\begin{equation}
||U(t)||_H\leq M\left(   ||U_0||_H+e^{\omega \tau_*}\sum_{i=1}^{+\infty}\int_{-\tau_*}^0 \frac{1}{1-c_i} |k_i(\varphi_i^{-1}(s))|\cdot ||g_i(s)||_H ds \right)e^{1-(\omega-L) t},
\end{equation}
for any $t\geq 0$.
\end{Theorem}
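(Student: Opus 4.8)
The plan is to reduce the mild formula \eqref{mild_solution} to a scalar Gronwall inequality. First I would take $\|\cdot\|_H$ on both sides of \eqref{mild_solution} and use the triangle inequality, the stability bound \eqref{semigroup}, the operator bounds $\|B_i\|_{\mathcal L(H)}\le b_i$, and the Lipschitz property together with $F(0)=0$ (so that $\|F(U(s))\|_H\le L\|U(s)\|_H$) to obtain
\[
\|U(t)\|_H\le Me^{-\omega t}\|U_0\|_H+\int_0^t Me^{-\omega(t-s)}\Big[\textstyle\sum_{i=1}^{+\infty}|k_i(s)|\,\|B_iU(s-\tau_i(s))\|_H+L\|U(s)\|_H\Big]\,ds .
\]
I would then pass to the weighted unknown $\psi(t):=e^{\omega t}\|U(t)\|_H$, which turns the leading term into the constant $M\|U_0\|_H$ and isolates the decay, leaving the Lipschitz term as $ML\int_0^t\psi$ and the delay terms carrying the factor $e^{\omega s}$.

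The core of the proof is the analysis of each delay integral. By \eqref{derivatadelay} the map $\varphi_i(s)=s-\tau_i(s)$ is a strictly increasing bijection, so the substitution $z=\varphi_i(s)$ is licit and produces $ds=dz/(1-\tau_i'(\varphi_i^{-1}(z)))$; bounding $1-\tau_i'\ge 1-c_i$ reproduces exactly the weights $\tfrac{1}{1-c_i}|k_i(\varphi_i^{-1}(z))|$ of \eqref{assumption}. Splitting the image interval at $0$, the part $z\in[-\tau_*,0]$ is where the history prescribes $B_iU(z)=g_i(z)$: after estimating the residual exponential weight by $e^{\omega\tau_*}$ it yields precisely the constant $Me^{\omega\tau_*}\sum_i\int_{-\tau_*}^0\tfrac{1}{1-c_i}|k_i(\varphi_i^{-1}(z))|\,\|g_i(z)\|_H\,dz$ appearing in the statement; the part $z\in[0,\varphi_i(t)]$ involves $\|U(z)\|_H$ itself and, once rewritten through $\psi$, is dominated by $\big(\sup_{[0,t]}\psi\big)\cdot M\sum_i\int_0^{+\infty}\tfrac{b_i}{1-c_i}|k_i(\varphi_i^{-1}(z))|\,dz$. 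The exchanges of the series with the integrals are justified by monotone/dominated convergence, the required summability being exactly the finiteness encoded in \eqref{assumption}.

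Putting these bounds together gives an inequality of the form $\psi(t)\le C_0+\int_0^t\beta(s)\psi(s)\,ds+(\text{coupling to }\sup_{[0,t]}\psi)$, with $C_0=M(\|U_0\|_H+e^{\omega\tau_*}\Sigma_g)$ (writing $\Sigma_g$ for the history double sum above) and with the total delay coefficient strictly below $1$ by \eqref{assumption}. I would close it with a Gronwall argument of the type $u(t)\le\alpha+\int_0^t\beta u\Rightarrow u(t)\le\alpha\exp(\int_0^t\beta)$: the smallness $<\tfrac1M$ in \eqref{assumption} forces the accumulated integral to stay $\le 1$, which is the source of the factor $e^{1}$ in the statement, while the hypothesis $L<\omega$ is what converts the surviving decay into the rate $\omega-L$. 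The delicate point — and the step I expect to be the main obstacle — is precisely this last bookkeeping: one must combine the Lipschitz contribution with the semigroup decay so as to recover the sharp exponent $\omega-L$ (rather than a cruder one obtained by naively pulling $M$ through the $L$-integral) and simultaneously match the two substituted delay integrals to the two sums of \eqref{assumption} with the correct exponential weights, all while keeping the interchange of the infinite summation and integration rigorous.
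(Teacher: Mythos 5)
Your overall route is the same as the paper's: take norms in \eqref{mild_solution}, use \eqref{semigroup}, the bounds $b_i$ and the Lipschitz property with $F(0)=0$, substitute $z=\varphi_i(s)$ with the Jacobian bound $1-\tau_i'\ge 1-c_i$ from \eqref{derivatadelay}, split each delay integral at $z=0$ into a history part (producing the $g_i$-constant with the weight $e^{\omega\tau_*}$) and a future part, pass to the weighted unknown $\tilde U(t)=e^{\omega t}\|U(t)\|_H$, and close with Gronwall together with \eqref{assumption}. The one genuine deviation is your treatment of the future part: you dominate it by $\bigl(\sup_{[0,t]}\tilde U\bigr)\cdot M\sum_i\int_0^{+\infty}\frac{b_i}{1-c_i}|k_i(\varphi_i^{-1}(z))|\,dz$ and plan to absorb it using the smallness $<1/M$. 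Carried out literally, that absorption gives $\sup_{[0,t]}\tilde U\le \frac{C_0}{1-\theta}+\frac{ML}{1-\theta}\int_0^t\tilde U(s)\,ds$ with $\theta:=M\sum_i\int_0^{+\infty}\frac{b_i}{1-c_i}|k_i(\varphi_i^{-1}(z))|\,dz<1$, hence a prefactor $\frac{1}{1-\theta}$ and a rate $\omega-\frac{ML}{1-\theta}$ --- not the factor $e^{1}$ and not $\omega-L$. Your own explanation of where $e^{1}$ comes from (the accumulated Gronwall integral staying below $1$) is only consistent with the other option, which is what the paper does: keep $\beta(s)=\sum_i\frac{b_i}{1-c_i}|k_i(\varphi_i^{-1}(s))|$ inside the Gronwall kernel, apply Gronwall to $\tilde U(t)\le\tilde\alpha+M\int_0^t[\beta(s)+L]\tilde U(s)\,ds$, and only afterwards use $\int_0^{+\infty}\beta(s)\,ds<1/M$ to bound $e^{M\int_0^t\beta}\le e$. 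You should commit to that version and drop the sup-coupling.

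The ``main obstacle'' you flag --- recovering the exponent $\omega-L$ rather than one contaminated by $M$ --- is a real issue, and you should know that the paper does not in fact resolve it: Gronwall applied to the kernel $M[\beta(s)+L]$ yields the factor $e^{MLt}$, i.e.\ the rate $\omega-ML$, and since $M\ge\|S(0)\|_{\mathcal{L}(H)}=1$ this is weaker than $\omega-L$ unless $M=1$. There is no hidden trick to look for; as written, the honest conclusion of this argument is decay at rate $\omega-ML$ (under $ML<\omega$), and the theorem's exponent matches the proof only for contraction-type semigroups with $M=1$. Apart from these two points (and the interchange of $\sum_i$ with $\int$, which you justify correctly via the summability in \eqref{assumption}), your outline reproduces the paper's proof of Theorem \ref{decay}.
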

Differently from Theorem \ref{well-posedness}, in Theorem \ref{decay} the exponential stability assumption \eqref{semigroup} on the semigroup $\{ S(t)\}_{t\geq 0}$ is needed.

The rest of the paper is organized as follows. In Section \ref{Wellpos} we will prove the existence of a mild solution (i.e. a Duhamel's formula) to system \eqref{abstract} using a fixed point argument. Then, in Section \ref{stabsec} we will prove Theorem \ref{decay}, i.e. the exponential stability of the unique mild solution to \eqref{abstract}. Finally, in Section \ref{example} we will present some technical examples of wave equation and plate system with viscoelastic and strong damping terms.
\section{Well-posedness result}\label{Wellpos}
In this Section we will prove the existence and uniqueness of a mild solution to \eqref{abstract}. As mentioned in the previous Section, we will use a fixed point argument, similarly to \cite{YY}.

\begin{proof}[Proof of Theorem \ref{well-posedness}]
Let us consider $\omega'>\omega>0$, which we will fix later on, and let us define the following Banach space:
$$
Y:=\left\{ U(t)\in \mathcal{C}([0,+\infty);H) \ : \ \sup_{t\geq 0} \ e^{-\omega' t} ||U(t)||_H <+\infty\right\},
$$
where
$$
||U||_Y:= \sup_{t\geq 0} \ e^{-\omega' t} ||U(t)||_H.
$$
We define the functional $\Phi :Y\to Y$ such that for any $U(t)\in Y$
$$
\Phi U(t) =S(t)U_0+\int_0^t S(t-s)\left[ \sum_{i=1}^{+\infty} k_i(s)B_iU(s-\tau_i(s))+F(U(s))\right] ds.
$$
Since F is Lipschitz and \eqref{dis1} holds, we have that 
$$
\begin{array}{l}
\displaystyle{ ||\Phi U(t)||_H\leq M e^{\omega t}||U_0||_H+M e^{\omega t} \int_0^t e^{-\omega s} \sum_{i=1}^{+\infty} |k_i(s)|\cdot||B_i U(s-\tau_i(s))||_H ds}\\
\hspace{1.8 cm}
\displaystyle{ +MLe^{\omega t}\int_0^t  e^{-\omega s}||U(s)||_H ds.}
\end{array}
$$
By change of variables $z=\varphi_i(s):=s-\tau_i(s)$ and using hypothesis \eqref{derivatadelay} on the derivative of time delays, we obtain
$$
\begin{array}{l}
\displaystyle{ ||\Phi U(t)||_H\leq M e^{\omega t} ||U_0||_H+Me^{\omega t} \sum_{i=1}^{+\infty} \int_{-\tau_i(0)}^{t-\tau_i(t)} e^{-\omega z} \frac{|k_i(\varphi_i^{-1}(z))|}{1-c_i} ||B_i U(z)||_H dz}\\
\hspace{2 cm}
\displaystyle{+MLe^{\omega t} \int_0^t e^{-\omega s} ||U(s)||_H ds.}
\end{array}
$$
Now, since the integrand function
$$
 e^{-\omega z}\frac{1}{1-c_i}|k_i(\varphi_i^{-1}(z))|\cdot||B_i U(z)||_H \geq 0
$$ 
for any $z\geq -\tau_*$ and $i\in\nat$, we obtain
$$
\begin{array}{l}
\displaystyle{||\Phi U(t)||_H\leq Me^{\omega t} ||U_0||_H+Me^{\omega t} \sum_{i=1}^{+\infty}\int_{-\tau_*}^0  e^{\omega \tau_*} \frac{|k_i(\varphi^{-1}_i (z))|}{1-c_i}||g_i(z)||_H dz}\\
\hspace{1.8 cm}
\displaystyle{+M e^{\omega t} \sum_{i=1}^{+\infty}\int_0^t e^{-\omega z} \frac{|k_i(\varphi^{-1}_i (z))|}{1-c_i}b_i ||U(z)||_H dz+MLe^{\omega t}\int_0^t e^{-\omega s} ||U(s)||_H ds}\\
\hspace{1.8 cm}
\displaystyle{ \leq M e^{\omega t} ||U_0||_H+Me^{\omega t} \sum_{i=1}^{+\infty} \int_{-\tau_*}^0 e^{\omega \tau_*} \frac{|k_i(\varphi^{-1}_i (z))|}{1-c_i}||g_i(z)||_H dz}\\
\hspace{1.8 cm}
\displaystyle{+M e^{\omega t}\sum_{i=1}^{+\infty} \int_0^t e^{(\omega'-\omega) z} \frac{|k_i(\varphi^{-1}_i (z))|}{1-c_i}b_i ||U||_Y dz+ ML e^{\omega t} \int_0^t e^{(\omega'-\omega)s} ||U||_Y ds}\\
\hspace{1.8 cm}
\displaystyle{ \leq M e^{\omega t} ||U_0||_H+Me^{\omega t} \sum_{i=1}^{+\infty}\int_{-\tau_*}^0  e^{\omega \tau_*} \frac{|k_i(\varphi^{-1}_i (z))|}{1-c_i}||g_i(z)||_H dz}\\
\hspace{1.8 cm}
\displaystyle{+M e^{\omega' t} ||U||_Y\sum_{i=1}^{+\infty} \int_0^t \frac{|k_i(\varphi^{-1}_i (z))|}{1-c_i}b_i  dz+ML||U||_Y \frac{ e^{\omega' t}}{\omega'-\omega} \left( 1-e^{-(\omega'-\omega)t}\right),}
\end{array}
$$
where in the last two inequalities we have simply used basic properties of exponential function and the definition of $||\cdot||_Y$. Hence, we obtain the following inequality:
$$
\begin{array}{l}
\displaystyle{ e^{-\omega' t}||\Phi U(t)||_H \leq M||U_0||_H +M e^{\omega \tau_*}\sum_{i=1}^{+\infty}\int_{-\tau_*}^0  \frac{|k_i(\varphi_i^{-1}(z))|}{1-c_i}||g_i(z)||_H dz}\\
\hspace{2.75 cm}
\displaystyle{+M||U||_Y  \sum_{i=1}^{+\infty} \int_0^tb_i \frac{|k_i(\varphi_i^{-1}(z))|}{1-c_i}dz+ML||U||_Y\frac{1}{\omega'-\omega}\left(1-e^{-(\omega'-\omega)t}\right).} 
\end{array}
$$
Therefore, using assumption \eqref{assumption}, the fact that $U\in Y$ and $\omega'>\omega$ yield
$$
||\Phi U(t)||_Y< +\infty.
$$
Hence, $\Phi U\in Y$. Now, we need to show that $\Phi$ is a contraction. To do so, let $u_1,u_2\in Y$. Then,
$$
\begin{array}{l}
\displaystyle{||\Phi u_1-\Phi u_2||_H \leq M\int_0^t e^{\omega (t-s)} \sum_{i=1}^{+\infty} |k_i(s)| \cdot ||B_i u_1(s-\tau_i(s))-B_iu_2(s-\tau_i(s))||_H ds}\\
\hspace{2.5 cm}
\displaystyle{ +ML\int_0^t e^{\omega(t-s)} ||u_1(s)-u_2(s)||_H ds.}
\end{array}
$$
As before, by using a change of variables $z=\varphi_i(s)=s-\tau_i(s)$, we obtain
$$
\begin{array}{l}
\displaystyle{||\Phi u_1-\Phi u_2||_H\leq Me^{\omega t} \sum_{i=1}^{+\infty}\int_{-\tau_*}^0 e^{-\omega z}  b_i \frac{|k_i(\varphi_i^{-1}(z))|}{1-c_i} ||u_1(z)-u_2(z)||_H dz}\\
\hspace{3 cm}
\displaystyle{+Me^{\omega t} \sum_{i=1}^{+\infty}\int_0^t  e^{-\omega z}  b_i \frac{|k_i(\varphi_i^{-1}(z))|}{1-c_i} ||u_1(z)-u_2(z)||_H dz}\\
\hspace{4 cm}
\displaystyle{+MLe^{\omega t}||u_1-u_2||_Y\int_0^t e^{-(\omega'-\omega)s}  ds}
\end{array}
$$
Again, from the definition of $||\cdot||_Y$ we have
$$
\begin{array}{l}
\displaystyle{||\Phi u_1-\Phi u_2||_H\leq Me^{\omega t} ||u_1-u_2||_Y \sum_{i=1}^{+\infty}\int_{-\tau_*}^0 e^{(\omega'-\omega) z} b_i \frac{|k_i(\varphi_i^{-1}(z))|}{1-c_i} dz}\\
\hspace{3 cm}
\displaystyle{+Me^{\omega t}||u_1-u_2||_Y\sum_{i=1}^{+\infty}\int_0^t e^{(\omega'-\omega)z} b_i \frac{|k_i(\varphi_i^{-1}(z))|}{1-c_i} dz}\\
\hspace{4 cm}
\displaystyle{+MLe^{\omega t}||u_1-u_2||_Y\frac{1}{\omega'-\omega} \left(e^{(\omega'-\omega)t}-1\right). }
\end{array}
$$
Then, using  $\omega ' > \omega$, we have that
$$
\begin{array}{l}
\displaystyle{ ||\Phi u_1-\Phi u_2||_Y \leq M \left( \tilde{M}+\frac{L}{\omega'-\omega}\right) ||u_1-u_2||_Y,}
\end{array}
$$
where
$$
\tilde{M}:=e^{\omega \tau_*}\sum_{i=1}^{+\infty} \int_{-\tau_*}^0 b_i \frac{|k_i(\varphi_i^{-1}(z))|}{1-c_i}dz+ \sum_{i=1}^{+\infty}\int_0^{+\infty} b_i \frac{|k_i(\varphi_i^{-1}(z))|}{1-c_i}dz.
$$
By assumption \eqref{assumption}, taking $\omega'>\omega$ such that 
$$
\tilde{M}+\frac{L}{\omega'-\omega}< \frac{1}{M},
$$
we have that $\Phi$ is a contraction on $Y$. Therefore, from Banach fixed point theorem we obtain the existence of a mild solution to \eqref{abstract} of the form \eqref{mild_solution}. In order to prove the uniqueness of the mild solution, let us suppose that there exist two solutions $u,v$ which satisfy \eqref{mild_solution}. Then,
$$
\begin{array}{l}
\displaystyle{ ||u(t)-v(t)||_H\leq Me^{\omega t}\int_0^t e^{-\omega s} \sum_{i=1}^{+\infty} |k_i(s)|\cdot ||B_i u(s-\tau_i(s))-B_i v(s-\tau_i(s))||_H ds }\\
\hspace{ 2.5 cm}
\displaystyle{ + ML e^{\omega t}\int_0^t e^{-\omega s} ||u(s)-v(s)||_H ds.}
\end{array}
$$
As before, by change of variables, and noticing that $B_i u(s)=B_i v(s)=g_i(s)$ for any $s\in I$, we have that
$$
\begin{array}{l}
\displaystyle{||u(t)-v(t)||_H\leq M e^{\omega t}\int_0^t e^{-\omega s} \sum_{i=1}^{+\infty} \frac{b_i}{1-c_i} |k_i(\varphi_i^{-1}(s))|\cdot ||u(s)-v(s)||_H ds}\\
\hspace{2.5 cm}
\displaystyle{+MLe^{\omega t} \int_0^t e^{-\omega s} ||u(s)-v(s)||_H ds.}
\end{array}
$$
Defining $\tilde{u}(t):=e^{-\omega t}||u(t)-v(t)||_H$ yields
$$
\begin{array}{l}
\displaystyle{ \tilde{u}(t)\leq M\int_0^t \beta (s) \tilde{u}(s) ds,}
\end{array}
$$
where
$$
\beta (s):= \sum_{i=1}^{+\infty} \frac{b_i}{1-c_i} |k_i(\varphi_i^{-1}(s))| + L.
$$
By direct Gronwall estimate, we can conclude that $||u(t)-v(t)||_H\equiv 0$ for any $t\geq 0$, which gives us a contradiction. Hence, the theorem is proved.
\end{proof}

\section{Exponential decay}\label{stabsec}
In this Section we will prove the exponential decay of solution to \eqref{abstract}, namely we prove Theorem \ref{decay}.
\begin{proof}[Proof of Theorem \ref{decay}]
From Duhamel's equation \eqref{mild_solution} we have that
$$
\begin{array}{l}
\displaystyle{||U(t)||_H\leq Me^{-\omega t} ||U_0||_H +Me^{-\omega t}\int_0^t e^{\omega s} \sum_{i=1}^{+\infty} |k_i(s)| ||B_iU(s-\tau_i(s))||_Hds}\\
\hspace{3 cm}
\displaystyle{+MLe^{-\omega t} \int_0^t e^{\omega s} ||U(s)||_H ds.}
\end{array}
$$
Hence, by usual change of variables, we obtain
$$
\begin{array}{l}
\displaystyle{ e^{\omega t}||U(t)||_H\leq M||U_0||_H+Me^{\omega \tau_*}\sum_{i=1}^{+\infty}\int_{-\tau_*}^0 \frac{1}{1-c_i} |k_i(\varphi_i^{-1}(s))|\cdot ||g_i(s)||_H ds}\\
\hspace{2.5 cm}
\displaystyle{+M\int_0^t e^{\omega s} \sum_{i=1}^{+\infty}\frac{b_i}{1-c_i} |k_i(\varphi_i^{-1}(s))| \cdot ||U(s)||_H ds+ML\int_0^t e^{\omega s} ||U(s)||_H ds.}
\end{array}
$$
By setting
$$
\tilde{U}(t):=e^{\omega t}||U(t)||_H
$$
for any $t\geq 0$ and
$$
\tilde{\alpha}:= M||U_0||_H+Me^{\omega \tau_*}\sum_{i=1}^{+\infty}\int_{-\tau_*}^0 \frac{1}{1-c_i} |k_i(\varphi_i^{-1}(s))|\cdot ||g_i(s)||_H ds,
$$
we obtain
$$
\tilde{U}(t)\leq \tilde{\alpha}+M\int_0^t \left[  \sum_{i=1}^{+\infty}\frac{b_i}{1-c_i} |k_i(\varphi_i^{-1}(s))| +L   \right] \tilde{U}(s) ds.
$$
Using Gronwall inequality yields
$$
||U(t)||_H\leq \tilde{\alpha} e^{M\int_0^t \beta(s)ds} e^{-(\omega-L)t},
$$
where 
$$
\beta(t):= \sum_{i=1}^{+\infty}\frac{b_i}{1-c_i} |k_i(\varphi_i^{-1}(t))| 
$$
for any $t\geq 0$. By assumption \eqref{assumption}, 
$$
\int_0^{+\infty} \beta (s)ds <\frac 1 M.
$$
Therefore,
$$
||U(t)||_H\leq \tilde{\alpha} e^{1-(\omega-L)t}, \quad t\geq 0.
$$
Since $L<\omega$, we obtain the thesis of the theorem.
\end{proof}

\section{Examples}\label{example}
In this Section we give some examples. We will show that the following models can be rewritten in the abstract form \eqref{abstract} in suitable Hilbert spaces, and, therefore, stability occurs under assumption \eqref{assumption}.
\subsection{The wave equation with memory and source term}\label{memory_wave_eq}
We present a semilinear wave equation with infinite memory damping and delay feedback. The effect of viscoelastic damping in wave equations with an extra delayed damping term has been analyzed in \cite{ANP} for the linear case and in \cite{PP} for the nonlinear one. We refer to \cite{ACS} for the undelayed system. Let $\Omega$ be a non-empty bounded subset of $\RR^n$, with $n\in\nat$ and denote its boundary by $\Gamma$. We suppose $\Gamma$ to be of class $C^2$. For any $i\in\nat$, we consider $\mathcal{O}_i\subset \Omega$ such that 
$$
\bigcup_{i=1}^{+\infty} \mathcal{O}_i= \Omega
$$
and $\mathcal{O}_i\cap \mathcal{O}_j=\emptyset$, whenever $i\neq j$. We consider the following system:
\begin{equation}\label{memory}
\begin{array}{l}
\displaystyle{u_{tt}(x,t)-\Delta u(x,t)+\int_0^{+\infty} \mu(s)\Delta u(x,t-s) ds}\\
\hspace{2 cm}
\displaystyle{+\sum_{i=1}^{+\infty} k_i(t)\chi_{\mathcal{O}_i}(x) u_t(t-\tau_i(t))=f(u(x,t)), \quad (x,t)\in \Omega \times (0,+\infty),}\\
\displaystyle{u(x,t)=0,  \quad (x,t)\in \Gamma\times (0,+\infty),}\\
\displaystyle{u(x,t)=u_0(x,t), \quad (x,t)\in \Omega\times (-\infty,0],}\\
\displaystyle{ u_t(x,0)=u_1(x), \quad x\in\Omega,}\\
\displaystyle{u_t(x,t)=g(x,t), \quad (x,t)\in\Omega\times I=\Omega \times\bigcup_{i=1}^{+\infty} [-\tau_i(0),0],}
\end{array}
\end{equation}
where, for any $i\in\nat$, $\tau_i(\cdot)$ are the time-dependent time delays, which satisfy hypothesis \eqref{derivatadelay}, $k_i(\cdot)\in L^1_{loc} ([0,+\infty))$, $f$ is a global Lipschitz continuous function of $u$ and $\mu(\cdot)$ is a locally absolutely continuous memory kernel which satisfies the following assumptions:
\begin{itemize}
\item[$(i)$] $\mu\in C^1(\RR^+)\cap L^1(\RR^+)$;
\item[$(ii)$] $\mu(0)=\mu_0>0$;
\item[$(iii)$]  $\int_0^{+\infty} \mu(s) ds =\tilde{\mu} <1$;
\item[$(iv)$] $\mu '(t)\leq -\delta \mu(t)$, for any $t\geq 0$ and for some $\delta >0$.
\end{itemize} 
As in Dafermos \cite{Dafermos}, we introduce the following auxiliar function:
\begin{equation}\label{eta}
\eta^t(x,s):=u(x,t)-u(x,t-s), \quad x\in\Omega, \ s,t\in (0,+\infty).
\end{equation}
Therefore, we can rewrite system \eqref{memory} in the following way
\begin{equation}
\label{memory+source_riscritta}
\begin{array}{l}
\displaystyle{ u_{tt}(x,t)-(1-\tilde{\mu})\Delta u(x,t)-\int_0^{+\infty} \mu(s)\Delta \eta^t(x,s) ds}\\
\hspace{3 cm}
\displaystyle{+\sum_{i=1}^N k_i(t)\chi_{\mathcal{O}_i}(x)u_t(x,t-\tau_i(t))=f(u(x,t)), \ (x,t)\in \Omega\times (0,+\infty),}\\
\displaystyle{\eta_t^t(x,s)=-\eta_s^t(x,s)+u_t(x,t), \ (x,t,s)\in \Omega\times (0,+\infty)\times (0,+\infty),}\\
\displaystyle{ u(x,t)=0, \ (x,t)\in \Gamma \times (0,+\infty),}\\
\displaystyle{ \eta^t(x,s)=0, \ (x,s) \in \Gamma \times (0,+\infty), \quad \text{for} \ t\geq 0,}\\
\displaystyle{ u(x,0)=u_0(x):=u_0(x,0), \ x\in \Omega,}\\
\displaystyle{ u_t(x,0)=u_1(x):=\frac{\partial u_0}{\partial t}(x,t)\Bigr| _{t=0},  \ x\in \Omega,}\\
\displaystyle{ \eta^0(x,s)=\eta_0(x,s):=u_0(x,0)-u_0(x,-s), \ (x,s)\in \Omega\times (0,+\infty),}\\
\displaystyle{ u_t(x,t)=g(x,t),  \ (x,t)\in \Omega\times I.}
\end{array}
\end{equation}
We consider the Hilbert space $L^2_\mu ((0,+\infty);H^1_0(\Omega))$ endowed with the inner product
$$
\langle \phi , \psi\rangle _{L^2_\mu ((0,+\infty);H^1_0(\Omega))}:= \int_{\Omega} \left( \int_0^{+\infty} \mu(s) \nabla \phi (x,s) \nabla \psi (x,s) ds \right) dx,
$$
and consider the Hilbert space
$$
H=H_0^1(\Omega)\times L^2(\Omega) \times L^2_\mu ((0,+\infty);H^1_0(\Omega)),
$$
equipped with the inner product
\begin{equation*}
\left\langle
\left (
\begin{array}{l}
u\\
v\\
w
\end{array}
\right ),
\left (
\begin{array}{l}
\tilde u\\
\tilde v\\
\tilde w
\end{array}
\right )
\right\rangle_H:=(1-\tilde{\mu}) \int_{\Omega} \nabla u \nabla \tilde u dx +\int_{\Omega} v\tilde v dx+ \int_{\Omega} \int_0^{+\infty} \mu(s) \nabla w \nabla \tilde w ds dx.
\end{equation*}
Setting $U=(u,u_t,\eta^t)$, we can rewrite \eqref{memory+source_riscritta} in the form \eqref{abstract}, where
\begin{equation}\label{Amemory}
A \begin{pmatrix}
u\\
v\\
w
\end{pmatrix}
=
\begin{pmatrix}
v\\
(1-\tilde{\mu})\Delta u+\int_0^{+\infty} \mu(s) \Delta w(s) ds \\
-w_s +v
\end{pmatrix},
\end{equation}
with domain
\begin{eqnarray*}
\mathcal{D(A)}&=&\{  (u,v,w)\in H_0^1(\Omega)\times H_0^1(\Omega)\times L^2_\mu ((0,+\infty);H^1_0(\Omega)) : \\
& & (1-\tilde{\mu})u +\int_0^{+\infty} \mu(s)w(s)ds \in H^2(\Omega)\cap H_0^1(\Omega), \ w_s\in L^2_\mu ((0,+\infty);H^1_0(\Omega))\} ,
\end{eqnarray*}
$B_i(u,v,\eta^t)^T := (0,\chi _{\mathcal{O}_i} v,0)^T$, for any $i\in\nat$, and $F(U(t))=(0,f(u(t)), 0)^T$. $A$ is exponentially stable (see e.g. \cite{Giorgi}). Therefore, if $L<\omega$ and assumption \eqref{assumption} is satisfied, then Theorem \ref{decay} holds. Hence, we obtain the stability result for system \eqref{memory}.

\subsection{The damped wave equation with source term}
As before, let $\Omega$ be a bounded domain of $\RR^n$, $n\in\nat$, with boundary $\partial\Omega$ of class $C^2$, and consider an open subset $\mathcal{O} \subset \Omega$, which satisfies the geometric control property in \cite{BLR}.  It can be seen, for instance, as the intersection of the domain $\Omega$ with an open neighborhood of the set (see \cite{Lions})
$$
\Gamma_0=\{ x\in \partial \Omega , \ m(x)\cdot \nu(x)>0\},
$$ 
where $m(x)=x-x_0, \ x_0\in\RR^n$. We take a countably infinite family of subsets $\{\mathcal{O}_i\}_{i\in\nat }$ of $\mathcal{O}$, such that
$$
\bigcup_{i=1}^{+\infty} \mathcal{O}_i=\mathcal{O}, \qquad \mathcal{O}_i\cap \mathcal{O}_j=\emptyset, \ i\neq j.
$$
Let $u:\Omega\times [0,+\infty)\to\RR$ be the solution to the following damped nonlinear wave equation:
\begin{equation}\label{wave_damped}
\begin{array}{l}
\displaystyle{u_{tt}(x,t) -\Delta u(x,t)+a\chi_{\mathcal{O}}(x) u_t(x,t)}\\
\hspace{3.2 cm}
\displaystyle{ +\sum_{i=1}^{+\infty} k_i(t)\chi_{\mathcal{O}_i}(x) u_t(x,t-\tau_i(t))=f(u(x,t)),\quad (x,t)\in \Omega\times (0,+\infty),}\\
\displaystyle{u(x,t)=0, \quad (x,t)\in\partial\Omega\times (0,+\infty),}\\
\displaystyle{u(x,0)=u_0(x), \ u_t(x,0)=u_1(x), \quad x\in\Omega,}\\
\displaystyle{u_t(x,s)=g(x,s), \quad (x,s)\in\Omega\times I.} 
\end{array}
\end{equation}
Here, $a$ is a positive constant, $\tau_i(\cdot)$ are the time-dependent time delays, for any $i\in\nat$, with $\tau_*$ defined as in \eqref{maxdelay}, and $k_i(\cdot)\in L^1_{loc}([0,+\infty))$, for any $i\in\nat$, while $f$ is a generic Lipschitz function of $u$.

We mention \cite{P2012} for the linear version of \eqref{wave_damped} (i.e. $f\equiv 0$) and \cite{KP} for the nonlinear one. If we define $U(t):=(u(t),u_t(t))$, we can rewrite system \eqref{wave_damped} in the abstract form \eqref{abstract}, with $H=H_0^1(\Omega)\times L^2(\Omega)$. The operator $A$ is defined as 
$$
A=
\begin{pmatrix}
0 & Id \\
\Delta &-a\chi_{\mathcal{O}}
\end{pmatrix},
$$
where $Id$ stands for the identity operator, while $B_i:H\to H$ are defined for any $i\in\nat$ as
$$
B_i \begin{pmatrix} u\\ v \end{pmatrix} = \begin{pmatrix} 0 \\ \chi_{\mathcal{O}_i} v \end{pmatrix} .
$$
We mention \cite{Zuazua, Komornik} in order to claim that $A$ generates a $C_0$-semigroup of contractions, which is exponentially stable. Therefore, we can apply Theorem \ref{decay} and exponential stability occurs under the main assumption \eqref{assumption}.

\subsection{The nonlinear plate equation with viscoelastic damping and source term}
As last example, we apply our theory of Sections \ref{Wellpos} and \ref{stabsec} to a nonlinear plate equation with a viscoelastic term and delay feedbacks. As above, we consider $\Omega$ a bounded domain of $\RR^n$, for $n\in\nat$, with boundary $\partial\Omega$ of class $C^2$. Let $\{\mathcal{O}_i\}_{i\in\nat}$ be a family of disjoint open sets of $\Omega$, which cover all the domain $\Omega$. Let us consider the following plate equation
\begin{equation}\label{plate}
\begin{array}{l}
\displaystyle{ u_{tt}(x,t)+\Delta^2(x,t)-\int_0^{+\infty}\mu(s)\Delta^2u(x,t-s)ds}\\
\hspace{3 cm}
\displaystyle{+\sum_{i=1}^{+\infty} k_i(t)\chi_{\mathcal{O}_i}(x)u_t(x,t-\tau_i(t))=f(u(x,t)),\quad (x,t)\in\Omega\times (0,+\infty)}\\
\displaystyle{ u(x,t)=\frac {\partial u}{\partial\nu }(x,t)=0, \ (x,t)\in\partial\Omega \times (0,+\infty),}\\
\displaystyle{ u(x,t)=u_0(x,t), \ (x,t)\in\Omega \times (-\infty,0],}\\
\displaystyle{ u_t(x,0)=u_1(x), \ x\in\Omega,} \\
\displaystyle{ u_t(x,t)=g(x,t), \ (x,t)\in\Omega \times I,}
\end{array}
\end{equation}
where $\tau_i(\cdot)$ are the time-dependent time delays, for any $i\in\nat$, $k_i(\cdot)\in L^1_{loc}([0,+\infty))$ are the damping coefficients and $\mu:(0,+\infty)\to (0,+\infty)$ is a locally absolutely continuous memory kernel which satisfies hypotheses $(i)-(iv)$ of Subsection \ref{memory_wave_eq}. Following \cite{Dafermos}, if we set $\eta^t$ as in \eqref{eta}, we can rewrite system \eqref{plate} in the following form
\begin{equation}
\label{plate2}
\begin{array}{l}
\displaystyle{ u_{tt}(x,t)+(1-\tilde{\mu})\Delta^2 u(x,t)+\int_0^{+\infty} \mu(s)\Delta^2 \eta^t(x,s) ds}\\
\hspace{3 cm}
\displaystyle{+\sum_{i=1}^{+\infty} k_i(t)\chi _{\mathcal{O}_i}(x) u_t(x,t-\tau_i(t))=f(u(x,t)), \ (x,t)\in \Omega\times (0,+\infty),}\\
\displaystyle{\eta_t^t(x,s)=-\eta_s^t(x,s)+u_t(x,t), \ (x,t,s)\in\Omega\times (0,+\infty)\times (0,+\infty),}\\
\displaystyle{ u(x,t)=\frac {\partial u}{\partial\nu }(x,t)= 0, \ (x,t)\in\partial\Omega \times (0,+\infty),}\\
\displaystyle{ \eta^t(x,s)=0, \ (x,s)\in \partial\Omega \times (0,+\infty), \quad \text{for} \ t\geq 0,}\\
\displaystyle{ u(x,0)=u_0(x):=u_0(x,0), \ x\in \Omega,}\\
\displaystyle{ u_t(x,0)=u_1(x):=\frac{\partial u_0}{\partial t}(x,t)\Bigr| _{t=0}, \ x\in \Omega,}\\
\displaystyle{ \eta^0(x,s)=\eta_0(x,s):=u_0(x,0)-u_0(x,-s), \ (x,s)\in \Omega\times (0,+\infty),}\\
\displaystyle{ u_t(x,t)=g(x,t), \ (x,t)\in\Omega\times I.}

\end{array}
\end{equation}

As before, if we consider $U(t)=(u(t),u_t(t))$, for any $t\geq 0$, we can rewrite system \eqref{plate} in the form \eqref{abstract}, where the operator $A$ is defined similarly to \eqref{Amemory}:
$$
A \begin{pmatrix}
u\\
v\\
w
\end{pmatrix}
=
\begin{pmatrix}
v\\
-(1-\tilde{\mu})\Delta^2 u-\int_0^{+\infty} \mu(s) \Delta^2 w(s) ds \\
-w_s +v
\end{pmatrix},
$$
$B_i=(0, \chi_{\mathcal{O}_i},0)^T$, for any $i\in\nat$ and $F(U(t))=(0,f(u(t)),0)^T$. Hence, if $f$ is Lipschitz continuous, we can apply Theorem \ref{decay}, which gives existence and uniqueness of a mild solution to \eqref{plate2} and its exponential stability, provided assumption \eqref{assumption} is satisfied.

\section*{Acknowledgments}
The research of the author is partially supported by the National GNAMPA (INdAM) Project 2020/2021 \emph{Buona positura, regolarità e controllo per alcune equazioni d’evoluzione}.


\begin{thebibliography}{100}

\bibitem{A}
{\sc F. Alabau-Boussouira.} 
\newblock {Asymptotic behavior for Timoshenko beams
subject to a single nonlinear feedback control.}
\newblock {\em Nonlinear Differ. Equ. Appl.}, 14 (2007), no. 5-6, 643–669.

\bibitem{AC}
{\sc F. Alabau-Boussouira and P. Cannarsa}.
\newblock A general method for proving sharp energy decay rates for memory-dissipative evolution equations.
\newblock {\em C. R. Math. Acad. Sci. Paris}, 347, no. 15-16:867--872, 2009.

\bibitem{ACS}
{\sc F.~Alabau-Boussouira, P.~Cannarsa and D.~Sforza.}
\newblock {Decay estimates for second order evolution equations with memory.}
\newblock {\em J. Funct. Anal.}, 254:1342--1372, 2008.

\bibitem{ANP}
{\sc F. Alabau-Boussouira, S. Nicaise and C. Pignotti.} 
\newblock Exponential stability of the wave equation with memory and time delay. 
\newblock {\em New prospects in direct, inverse and control problems for evolution equations}, 
\newblock 1–22, Springer INdAM Ser., 10:1–22, Springer, Cham,
2014.

\bibitem{BLR}
{\sc C. Bardos, G. Lebeau and J. Rauch.}
\newblock Sharp sufficient conditions for the observation, control and stabilization of waves from the boundary.
\newblock{\em SIAM J. Control Optim.}, 30:1024-1065, 1992.

\bibitem{Chentouf}
{\sc B. Chentouf.} 
\newblock Compensation of the interior delay effect for a rotating disk-beam system.
\newblock {\em IMA J. Math. Control Inform.}, 33:963–978, 2016.

\bibitem{Dafermos}
{\sc C.M.~Dafermos.}
\newblock Asymptotic stability in viscoelasticity.
\newblock {\em Arch. Rational Mech. Anal.}, 37:297--308, 1970.

\bibitem{Datko2}
{\sc R. Datko}. 
\newblock Two examples of ill-posedness with respect to time delays revisited. 
\newblock {\em IEEE Trans. Automatic Control.}, 42:511-–515, 1997.


\bibitem{Datko}
{\sc R. Datko, J. Lagnese, and M.P. Polis.}
\newblock An example on the effect of time delays in
boundary feedback stabilization of wave equations.
\newblock {\em SIAM J. Control Optim.}, 24:152--156, 1986.

\bibitem{FP}
{\sc G. Fragnelli and C. Pignotti}.
\newblock Stability of solutions to nonlinear wave equations with switching time delay.
\newblock {\em Dyn. Partial Differ. Equ.}, 13:31--51, 2016.

\bibitem{Giorgi}
{\sc C.~Giorgi, J.E.~Mu\~{n}oz Rivera and V.~Pata}.
\newblock Global attractors for a semilinear hyperbolic equation in viscoelasticity.
\newblock {\em J. Math. Anal. Appl.}, 260:83--99,
2001.

\bibitem{Guesmia}
{\sc A.~Guesmia.}
\newblock Well-posedness and exponential stability of an abstract evolution equation with
infinite memory and time delay.
\newblock {\em IMA J. Math. Control Inform.}, 30:507--526, 2013.

\bibitem{Komornik}
{\sc V. Komornik.}
\newblock {\em Exact controllability and stabilization. The multiplier method}. 
\newblock Masson, Paris and John Wiley \& Sons, Chicester, 1994.

\bibitem{KP}
{\sc V.~Komornik and C.~Pignotti}.
\newblock Energy decay for evolution equations with delay feedbacks.
\newblock  {\em Math. Nachr.,} to appear.

\bibitem{Lions}
{\sc J.L. Lions}.
\newblock Exact controllability, stabilizability, and perturbartions for distributed systems.
\newblock{Siam Rev.}, 30 (1988) n.1, 1-68.

\bibitem{MK}
 {\sc M.~I.~Mustafa and M.~Kafini}.
 \newblock Energy decay for viscoelastic plates with distributed delay and source term.
 \newblock {\em Z. Angew. Math. Phys.}, 67, Art. 36, 18 pp., 2016.

\bibitem{NP2015}
{\sc S. Nicaise and C. Pignotti.}
\newblock Exponential stability of abstract evolution equation with time delay.
\newblock {\em J. Evol. Equ.}, 15:107--129, 2015.

\bibitem{NP2018}
{\sc S. Nicaise and C. Pignotti.} 
\newblock Well-posedness and stability results for nonlinear abstract evolution
equations with time delays. 
\newblock {\em J. Evol. Equ.}, 18:947--971, 2018.

\bibitem{NP}
{\sc S.~Nicaise and C.~Pignotti.}
\newblock Stability and instability results of the wave equation with a delay term in the boundary or internal feedbacks.
\newblock {\em SIAM J. Control Optim.},  45:1561--1585, 2006.

\bibitem{PP}
{\sc A. Paolucci and C. Pignotti.}
\newblock Exponential decay for semilinear wave equations with viscoelastic damping and delay feedback.
\newblock {\em Preprint 2020, ArXiv:2009.07777.}

\bibitem{Pazy}
{\sc A.~Pazy}.
\newblock Semigroups of linear operators and applications to partial differential equations.
\newblock {\em Vol. 44 of Applied Math. Sciences. Springer-Verlag}, New York, 1983.

\bibitem{P}
{\sc C.~Pignotti.}
\newblock Stability results for second-order evolution equations with memory and switching time-delay.
\newblock {\em J. Dynam. Differential Equations} 29:1309--1324, 2017.

\bibitem{P2012}
{\sc C. Pignotti.}
\newblock A note on stabilization of locally damped wave equations with time delay.
\newblock {\em Systems Control Lett.}. 61:92--97, 2012.

\bibitem{SS}
{\sc B.~Said-Houari and A.~Soufyane.}
\newblock  Stability result of the Timoshenko system with delay and
boundary feedback.
\newblock{\em IMA J. Math. Control Inform.}, 29:383--398, 2012.

\bibitem{Yang}
{\sc Z.~Yang}.
\newblock Existence and energy decay of solutions for the Euler-Bernoulli viscoelastic equation with a delay.
\newblock {\em Z. Angew. Math. Phys.}, 66:727–745, 2015.


\bibitem{YY}
{\sc	 T. Yokota, K. Yoshii.}
\newblock Solvability in abstract evolution equations with countable time delays in Banach spaces: Global Lipschitz perturbation. 
\newblock {\em Evolution Equations \& Control Theory}, 2020, doi: 10.3934/eect.2020086.

\bibitem{Zuazua}
{\sc E. Zuazua.}
\newblock { Exponential decay for the semi-linear wave equation with locally
distributed damping.}
\newblock{\em Comm. Partial Differential Equations}, 15:205–235, 1990.

\end{thebibliography}
\end{document}